\numberwithin{equation}{section} 
\newcommand{\N}{\mathbb{N}}
\newcommand{\Z}{\mathbb{Z}}
\newcommand{\R}{\mathbb{R}}
\newcommand{\E}{\mathbb{E}}
\newcommand{\di}{\textup{d}}
\newcommand{\1}{\mathbf{1}}
\newcommand{\schw}{\xrightarrow{\smash{d}}}
\newcommand{\stab}{\xrightarrow{\smash{\mathcal{L}-s}}}
\newcommand{\toop}{\stackrel{\mathbb{P}}{\longrightarrow}}
\newtheorem{theorem}{Theorem}[section]
\newtheorem{proposition}[theorem]{Proposition}
\newtheorem{lemma}[theorem]{Lemma}
\theoremstyle{definition}
\newtheorem{remark}[theorem]{Remark}
\begin{document}

\begin{frontmatter}
\pretitle{Research Article}

\title{A limit theorem for a class of stationary increments L\'evy
moving average process with multiple singularities}

\author{\inits{M.M.}\fnms{Mathias M{\o}rck}~\snm{Ljungdahl}\ead[label=e1]{ljungdahl@math.au.dk}}
\author{\inits{M.}\fnms{Mark}~\snm{Podolskij}\thanksref{cor1}\ead[label=e2]{mpodolskij@math.au.dk}}
\thankstext[type=corresp,id=cor1]{Corresponding author.}
\address{Department of Mathematics, \institution{Aarhus University}, Aarhus, \cny{Denmark}}



\markboth{M.M. Ljungdahl, M. Podolskij}{A limit theorem for a class of stationary increments L\'evy moving average process}

\begin{abstract}
In this paper we present some new limit theorems for power variations of
stationary increment L\'evy driven moving average processes.
Recently, such asymptotic results have been investigated in 
[Ann. Probab. 45(6B) (2017), 4477--4528, Festschrift for Bernt {\O}ksendal, Stochastics 81(1) (2017), 360--383]
under the assumption that the kernel function potentially exhibits a singular
behaviour at $0$. The aim of this work is to demonstrate how some of the
results change when the kernel function has multiple singularity points.
Our paper is also related to the article 
[Stoch. Process. Appl. 125(2) (2014), 653--677]
that studied the same
mathematical question for the class of Brownian semi-stationary models.
\end{abstract}
\begin{keywords}
\kwd{L\'evy processes}
\kwd{limit theorems}
\kwd{moving averages}
\kwd{fractional processes}
\kwd{stable convergence}
\kwd{high frequency data}
\end{keywords}
\begin{keywords}[MSC2010]%
\kwd{60F05}
\kwd{60F15}
\kwd{60G22}
\kwd{60G48}
\kwd{60H05}
\end{keywords}

\received{\sday{2} \smonth{3} \syear{2018}}
\revised{\sday{26} \smonth{6} \syear{2018}}
\accepted{\sday{29} \smonth{7} \syear{2018}}
\publishedonline{\sday{20} \smonth{8} \syear{2018}}
\end{frontmatter}

\section{Introduction}
\label{sec1}

In recent years limit theorems and statistical inference for high
frequency observations of stochastic processes have received a great
deal of attention. The most prominent class of high frequency
statistics are power variations that have been proved to be of immense
importance for the analysis of the fine structure of an underlying
stochastic process. The asymptotic theory for power variations and
related statistics has been intensively studied in the setting of It\^o
semimartingales, fractional Brownian motion and Brownian
semi-stationary processes, to name just a few; see for example \cite
{BCP11,BCP13,BGJPS05,C01,JP12} among many others.

In the recent work \cite{BLP17,BP17} power variations of stationary
increments L\'evy moving average processes have been investigated in
details. These are continuous-time stochastic processes $(X_t)_{t \geq
0}$, defined on a probability space $(\varOmega, \mathcal{F}, \mathbb
{P})$, that are given by
\begin{equation}
\label{Xmodel} X_t = \int_{-\infty}^{t}
\bigl(g(t - s) - g_0(-s) \bigr) \, \di L_s,
\end{equation}
where $L = (L_t)_{t \in\R}$ is a symmetric L\'evy process on $\R$ with
$L_0 = 0$ and without Gaussian component. Moreover, $g, g_0 : \R\to\R
$ are deterministic functions vanishing on $(-\infty, 0)$. The most
prominent subclasses include L\'evy moving average processes, which
correspond to the setting $g_0 = 0$, and the linear fractional stable
motion, which is obtained by taking $g(s) = g_0(s) = s^\alpha_+$ and
$L$ being a symmetric $\beta$-stable L\'evy process with $\beta\in(0,
2)$. The latter is a self-similar process with index $H=\alpha+1/\beta
$; see \cite{ST94}.

We introduce the $k$th order increments $\Delta_{i, k}^{n} X$ of $X$,
$k \in\N$, that are defined by
\begin{equation}
\label{filter} \Delta_{i, k}^{n} X := \sum
_{j = 0}^k (-1)^j \binom{k}{j}
X_{(i - j) /
n}, \quad i \geq k.
\end{equation}
For example, we have that $\Delta_{i, 1}^n X = \smash{X_{\frac{i}{n}}}
- \smash{X_{\frac{i - 1}{n}}}$ and $\Delta_{i, 2}^n X = \smash{X_{\frac
{i}{n}}} - 2 \smash{X_{\frac{i - 1}{n}}} + \smash{X_{\frac{i -
2}{n}}}$. The main statistic of interest is the power variation
computed on the basis of $k$th order increments:
\begin{equation}
\label{vn} V(X, p; k)_n := \sum_{i = k}^n
|\Delta_{i, k}^{n} X|^p, \quad p > 0.
\end{equation}
A variety of asymptotic results has been shown for the statistic
$V(p;k)_n$ in \cite{BLP17,BP17}. The mode of convergence and possible
limits heavily depend on the interplay between the power $p$, the form
of the kernel function $g$ and the Blumenthal--Getoor index of $L$. We
recall that the Blumenthal--Getoor index is defined via
\begin{equation}
\label{BG} \beta:= \inf \Biggl\{r \geq0 : \int_{-1}^1
|x|^r \, \nu(\di x) < \infty \Biggr\} \in[0, 2],
\end{equation}
where $\nu$ denotes the L\'evy measure of $L$. It is well known that
$\sum_{s \in[0,1]} |\Delta L_s|^p$ is finite when $p > \beta$, while
it is infinite for $p < \beta$. Here $\Delta L_s = L_s - L_{s-}$ where
$L_{s-} = \lim_{u \uparrow s, u < s} L_u$. To formulate the results of
\cite{BLP17,BP17}, we introduce the following set of assumptions on
$g$, $g_0$ and $\nu$:
\medbreak
\noindent\textbf{Assumption~(A):} The function $g : \R\to\R$
satisfies the condition
\begin{equation}
\label{kshs} g(t) \sim c_0 t^\alpha\quad\text{as } t
\downarrow0 \quad\text{for some } \alpha> 0\text{ and } c_0 \neq0,
\end{equation}
where $g(t) \sim f(t)$ as $t\downarrow0$ means that $\lim_{t
\downarrow0} g(t) / f(t) = 1$. For some $w\in(0, 2]$, $\limsup_{t \to
\infty} \nu(x : |x| \geq t) t^{w} < \infty$ and $g - g_0$ is a bounded
function in $L^{w}(\R_+)$. Furthermore, $g$ is $k$-times continuous
differentiable on $(0, \infty)$ and there exists a $\delta> 0$ such
that $|g^{(k)}(t)| \leq K t^{\alpha- k}$ for all $t \in(0, \delta)$,
$|g^{(k)}|$ is decreasing on $(\delta, \infty)$ and $g^{(j)} \in
L^w((\delta, \infty))$ for $j \in\{1, k\}$.
\medbreak
\noindent\textbf{Assumption~(A-log):} In addition to (A) suppose that
\[
\int_\delta^\infty|g^{(k)}(s)|^w
\big| \log\bigl(|g^{(k)}(s)|\bigr)\big| \, \di s < \infty.
\]
Intuitively speaking, Assumption~(A) says that $g^{(k)}$ may
have a singularity at $0$ when $\alpha$ is small, but it is smooth
outside of $0$. The theorem below has been proved in \cite{BLP17,BP17}. We recall that a sequence of $\R^d$-valued random variables
$(Y_n)_{n\geq1}$ is said to converge stably in law to a random
variable $Y$, defined on an extension of the original probability space
$(\varOmega, \mathcal{F}, \mathbb{P})$, whenever the joint convergence in
distribution $(Y_n, Z) \schw(Y,Z)$ holds for any $\mathcal
F$-measurable $Z$; in this case we use the notation $Y_n \stab Y$. We
refer to \cite{AE78,R63} for a detailed exposition of stable convergence.

\begin{theorem}[{\cite[Theorem~1.1(i)]{BLP17} and \cite[Theorem~1.2(i)]{BP17}}]
\label{th1}
Suppose that Assumption~(A) holds, the Blumenthal--Getoor index
satisfies $\beta< 2$ and $p > \beta$. If $w = 1$ assume that (A-log)
holds. Then we obtain the following cases:
\begin{enumerate}
\item\label{it:th1:1} When $\alpha<k-1/p$ then we have the stable convergence
\begin{equation}
\label{part1} %
\begin{aligned} n^{\alpha p} V(X, p;
k)_n &\stab|c_0|^p \sum
_{m : T_m \in[0, 1]} |\Delta L_{T_m}|^p
V_m
\\
\quad\text{with} \quad V_m &= \sum_{l = 0}^{\infty}
|h_k(l + U_m)|^p, \end{aligned} %
\end{equation}
where $(T_m)_{m\geq1}$ denote the jump times of $L$, $(U_m)_{m \geq
1}$ is a sequence of independent identically (i.i.)
$\mathcal{U}(0,1)$-distributed\querymark{Q2} variables independent of
$L$, and the function $h_k$ is defined by
\begin{equation}
\label{def-h} h_k(x) = \sum_{j = 0}^k
(-1)^j \binom{k}{j} (x - j)_{+}^{\alpha} \quad
\text{with} \quad y_+ = \max\{y, 0\}.
\end{equation}

\item\label{it:th1:2} When $\alpha=k-1/p$ and additionally
$1/p+1/w>1$, then we have
\begin{equation}
\label{part2} %
\begin{aligned} \frac{n^{\alpha p}}{\log(n)} V(X,p;k)_n
&\toop | c_0 q_{k, \alpha} |^p \sum
_{s \in(0, 1]} |\Delta L_s|^p
\\
\quad\text{with} \quad q_{k, \alpha} &:= \prod_{j = 0}^{k - 1}
(\alpha- j). \end{aligned} %
\end{equation}
\end{enumerate}
\end{theorem}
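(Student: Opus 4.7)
My plan is to follow a truncation approach familiar from power-variation proofs for pure-jump processes. Decompose the driving L\'evy process as $L = L^{\epsilon} + M^{\epsilon}$, where $L^{\epsilon}$ collects the jumps with $|\Delta L| > \epsilon$ (a compound Poisson process) and $M^{\epsilon}$ absorbs the small jumps; this induces a decomposition $X = X^{\epsilon} + Y^{\epsilon}$. The big-jump part $X^{\epsilon}$ should produce the claimed limit, the small-jump part $Y^{\epsilon}$ should be asymptotically negligible uniformly in $n$ as $\epsilon \to 0$, and a standard Slutsky-type argument will finish the proof. The singularity $g(t) \sim c_0 t^\alpha$ at $0$ forces each large jump to generate a localized $n^{-\alpha}$-sized contribution to the nearby $k$th-order increments, while smooth contributions scale like $n^{-k} \ll n^{-\alpha}$ when $\alpha < k$, which is exactly what makes the truncation natural.

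\textbf{Proof of (i).} For a big jump $\Delta L_{T_m}$ with $T_m \in [0,1]$, set $i_m = \lceil n T_m \rceil$ and $U_m^{(n)} = i_m - n T_m \in [0,1)$. For $i = i_m + l$ with $l \geq 0$, the contribution of this jump to $\Delta_{i,k}^n X^{\epsilon}$ is
\[
\Delta L_{T_m} \sum_{j=0}^k (-1)^j \binom{k}{j} g\bigl((l - j + U_m^{(n)})/n\bigr) \1_{\{l - j + U_m^{(n)} \geq 0\}}.
\]
By $g(t) \sim c_0 t^\alpha$ near $0$, for bounded $l$ this equals $c_0 n^{-\alpha} \Delta L_{T_m} h_k(l + U_m^{(n)}) (1 + o(1))$, while the tail in $l$ is controlled through $g^{(k)}$ via Assumption~(A) and remains summable precisely because $\alpha < k - 1/p$. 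Raising to the $p$th power, multiplying by $n^{\alpha p}$, summing over $l$, and invoking that $(U_m^{(n)})_{m}$ converges jointly with $L$ to an i.i.d.\ $\mathcal U(0,1)$ sequence independent of $L$ (since jump times are not grid-adapted), I recover the right-hand side of \eqref{part1} with stable convergence; cross-terms between indices close to two distinct jumps are negligible on an overwhelming event. The remaining ingredient is to show $n^{\alpha p} V(Y^{\epsilon}, p; k)_n \toop 0$ as first $n \to \infty$ and then $\epsilon \to 0$: I would rewrite $\Delta_{i,k}^n Y^{\epsilon}$ as a stochastic integral against $M^{\epsilon}$ with kernel $\sum_{j=0}^k (-1)^j \binom{k}{j} g((i - j)/n - s)$ and apply Rosenthal/BDG-type $L^p$ bounds for purely discontinuous integrals; the interplay $p > \beta$, the tail bound $\nu(|x| > t) \lesssim t^{-w}$, and the $L^w$-integrability of $g$ and $g^{(k)}$ deliver the required rate, with (A-log) covering the borderline $w = 1$.

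\textbf{Proof of (ii) and main obstacle.} For the critical regime $\alpha = k - 1/p$ I would apply the same truncation. The new analytic input is $h_k(x) \sim q_{k,\alpha} x^{\alpha - k}$ as $x \to \infty$, which follows from $\frac{d^k}{dx^k} x^\alpha = q_{k,\alpha} x^{\alpha - k}$ combined with the fact that $h_k$ annihilates polynomials of degree $< k$; at the critical exponent this gives $|h_k(x)|^p \sim |q_{k,\alpha}|^p x^{-1}$. Hence the truncated per-jump sum $\sum_{l = 0}^{n - i_m} |h_k(l + U_m^{(n)})|^p$ grows like $|q_{k,\alpha}|^p \log n$, and after dividing by $\log n$ the random offsets $U_m^{(n)}$ average out to the deterministic constant $|q_{k,\alpha}|^p$ multiplying $\sum_{s \in (0,1]} |\Delta L_s|^p$, which collapses the stable limit of (i) to a convergence in probability. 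The main obstacle in both parts will be the uniform-in-$n$ bound on the small-jump variation, i.e.\ $n^{\alpha p} V(Y^{\epsilon}, p; k)_n = o_{\mathbb{P}}(1)$ (resp.\ the same divided by $\log n$ in (ii)): hitting the exact rate $n^{-\alpha p}$ requires a careful split between the singular region where $g$ inherits its $c_0 t^\alpha$ behavior and the smooth tail controlled by $g^{(k)} \in L^w$, both calibrated against the tail of $\nu$. The stronger condition $1/p + 1/w > 1$ in part (ii) is precisely what allows this estimate to survive the additional division by $\log n$.
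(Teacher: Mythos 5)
Your proposal is correct and follows essentially the same route as the paper: the paper does not reprove Theorem~\ref{th1} directly (it is quoted from \cite{BLP17,BP17}) but proves the generalization Theorem~\ref{th2} by exactly your strategy — big-jump/small-jump truncation, localization of the compound Poisson part around each jump time with the stable convergence of the fractional parts $\{nT_m\}$ to i.i.d.\ uniforms (Proposition~\ref{prop1}), emergence of $h_k$ from $g(t)\sim c_0 t^{\alpha}$, and in the critical case the $\log n$ growth of $\sum_u |h_k|^p$ via $|h_k(x)|^p\sim |q_{k,\alpha}|^p x^{-1}$. The only cosmetic differences are that the paper controls the small-jump part through the Rajput--Rosi\'nski integrability criterion (the function $\varPhi_p$) rather than Rosenthal/BDG bounds, and that the condition $1/p+1/w>1$ is actually consumed by the smooth remainder term $R_{i,n,\varepsilon}$ of the kernel rather than by the small-jump estimate.
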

We remark that the first order asymptotic theory of \cite
[Theorem~1.1]{BLP17} includes two more regimes: an ergodic type limit
theorem in the setting $p < \beta$, $\alpha< k - 1 / \beta$ and
convergence in probability to a random integral in the setting $p
\geq1$, $\alpha> k - 1 / \max\{p, \beta\}$. However, in this paper we
concentrate ourselves on results of Theorem~\ref{th1}, which are quite
non-standard in the literature. More specifically, our aim is to extend
the theory of Theorem~\ref{th1} to kernels $g$ that exhibit multiple
singularities. We call a point $x \in\R_+$ a singularity point when
the $k$th derivative $g^{(k)}$ of $g$ explodes at $x$. Note that under
Assumption~(A) and condition $\alpha\leq k - 1 / p$ the function $g$
has only one singularity point at $x=0$. In practical applications a
singularity point $x \in\R_+$ leads to a strong feedback effect
stemming from the past jumps around the time $t-x$. Such effects has
been discussed in the context of turbulence modelling in \cite{GP14}.

We will show that the limits in Theorem~\ref{th1}\ref{it:th1:1} and~\ref
{it:th1:2} will be affected by the presence of multiple singularity
points. More precisely, we will see that the increments $\Delta
_{i,k}^{n} X$ can be heavily influenced by the jumps of $L$ that
happened in the past, and the time delay is determined by the
singularity points of $g$. The obtained result is similar in spirit to
the work \cite{GP14} that studied quadratic variation of Brownian
semi-stationary processes under multiple singularities of the kernel
$g$. Furthermore, we will prove that in general the stable convergence
in Theorem~\ref{th1}\ref{it:th1:1} only holds along a subsequence.

The paper is structured as follows. Section~\ref{sec2} presents the
main results of the article. Proofs are collected in Section~\ref{sec3}.

\section{Main results}
\label{sec2}

We consider stationary increments L\'evy moving average processes as
defined at \eqref{Xmodel} and recall that the driving motion $L$ is a
pure jump L\'evy process with L\'evy measure $\nu$. Now, we introduce
the condition on the kernel function $g$:
\medbreak
\noindent\textbf{Assumption~(B):} For some $w \in(0, 2]$, $\limsup_{t
\to\infty} \nu(x : |x| \geq t) t^w < \infty$ and $g - g_0$ is a
bounded function in $L^w(\R_+)$. Furthermore, there exist points $0 =
\theta_0 < \theta_1 < \cdots< \theta_l$ such that the following
properties hold:
\begin{enumerate}
\item\label{it:B:1} $g(t) \sim c_0 t^{\alpha_0}$ as $t \downarrow0$
for some $\alpha_0 > 0$ and $c_0 \neq0$.

\item\label{it:B:2} $g(t) \sim c_z |t - \theta_z|^{\alpha_z}$ as $t \to
\theta_z$ for some $\alpha_z > 0$ and $c_z \neq0$, and for all $z = 1,
\ldots, l$.

\item\label{it:B:3} $g \in C^k(\R_+ \setminus\{\theta_0, \ldots, \theta
_l\})$.

\item\label{it:B:4} There exist $\delta$, $K > 0$ such that
$|g^{(k)}(t)| \leq K |t - \theta_z|^{\alpha_z - k}$ for all $t \in
(\theta_z - \delta, \theta_z + \delta) \setminus\{\theta_z\}$, for any
$z = 0, \ldots, l$. Furthermore, there exists a $\delta' > 0$ such that
$|g^{(k)}|$ is decreasing on $(\theta_l + \delta', \infty)$ and
$g^{(j)} \in L^w ((\theta_l + \delta', \infty))$ for $j \in\{1, k\}$.
\end{enumerate}
Let us give some remarks on Assumption~(B). First of all,
conditions~(B)\ref{it:B:1} and~(B)\ref{it:B:2}, which are direct
extensions of \eqref{kshs}, mean that for small powers $\alpha_z > 0$
the points $\theta_z$ are singularities of $g$ in the sense that
$g^{(k)}(\theta_z)$ does not exist. On the other hand, condition~(B)\ref
{it:B:3} states that there exist no further singularities. The
parameter $w$ is by no means unique. It simultaneously describes the
tail behaviours of the L\'evy measure $\nu$ and the integrability of
the function $|g^{(k)}|$, which exhibit a trade-off. When $L$ is $\beta
$-stable we always take $w=\beta$. Furthermore, Assumption~(B)
guarantees the existence of $X_t$ for all $t \geq0$. Indeed, it
follows from \cite[Theorem~7]{RR89} that the process $X$ is
well-defined if and only if for all $t \geq0$,
\begin{equation}
\label{sdkfhsd;kf} \int_{-t}^\infty\int
_{\R} \bigl(| f_t(s)x |^2 \wedge1
\bigr) \, \nu (\di x) \, \di s < \infty,
\end{equation}
where $f_t(s) = g(t + s) - g_0(s)$. By adding and subtracting $g$ to
$f_t$ it follows by Assumption~(B) and the mean value theorem that
$f_t$ is a bounded function in $L^w(\R_+)$. For all $\epsilon> 0$,
Assumption~(B) implies that
\[
\int_{\R} \bigl(|y x|^2 \wedge1\bigr) \,\nu(
\di x) \leq K \bigl( \1_{\{|y| \leq
1\}} |y|^w + \1_{\{|y| > 1\}}|y|^{\beta+ \epsilon}
\bigr),
\]
which shows \eqref{sdkfhsd;kf} since $f_t$ is a bounded function in
$L^w(\R_+)$.

\begin{remark}[Toy example]
\label{rem1}
Recall the following well-known results about the power variation of a
pure jump L\'evy process $L$:
\[
V(L,p;k)_n \toop\sum_{s \in[0,1]} |\Delta
L_s|^p < \infty
\]
for any $k \geq1$ and any $p > \beta$. Let us now consider a simple
stationary increments L\'evy moving average process $X$ with $g_0 = 0$
and $g(x) =
\1_{[0, 1]}(x)$. In this case we may call the points $\theta_0 = 0$ and
$\theta_1 = 1$ the singularities of $g$, although they do not precisely
correspond to conditions~(B)\ref{it:B:1} and~(B)\ref{it:B:2}, and we
observe that $X_t = L_t - L_{t - 1}$. Hence, we obtain the convergence
in probability
\[
V(X,p;k)_n \toop\sum_{s \in[0, 1]} |\Delta
L_s|^p + \sum_{s \in[-1,
0]} |\Delta
L_s|^p
\]
for any $k \geq1$ and any $p > \beta$. This result demonstrates that
even in the simplest setting multiple singularities lead to a different limit.
\end{remark}
It turns out that only the minimal powers among $\{\alpha_0,
\ldots, \alpha_l\}$ determine the asymptotic behaviour of the statistic
$V(X, p; k)_n$. Thus, we define
\begin{equation}
\label{Aalpha} \alpha:= \min\{\alpha_0, \ldots,
\alpha_l\} \qquad\text{and} \qquad \mathcal{A} := \{z :
\alpha_z = \alpha\}.
\end{equation}
Furthermore, we introduce the notation $h_{k, 0} := h_k$ and
\begin{equation}
\label{hkz} h_{k, z}(x) = \sum_{j = 0}^k
(-1)^j \binom{k}{j} |x - j|^{\alpha_z} \quad\text{for } z =
1, \ldots, l.
\end{equation}
In the main result below we consider a subsequence $(n_j)_{j \in\N}$
such that the following condition holds:
\begin{equation}
\label{nj} \lim_{j \to\infty} \{n_j
\theta_z\} = \eta_z \in[0, 1] \quad\text {for all } z
\in\mathcal{A},
\end{equation}
where $\{x\}$ denotes the fractional part of $x \in\R$. Obviously,
such a subsequence always exists since $\{n \theta_z\}$ is a bounded
sequence. Sometimes we will require a stronger condition, which is
analogous to Assumption~(A-log):
\medbreak
\noindent\textbf{Assumption~(B-log):} Condition~(B) holds and we have that
\[
\int_{\theta_l+\delta'}^{\infty} |g^{(k)}
(t)|^{w} \big|\log\bigl(|g^{(k)} (t)|\bigr)\big| \, \di t < \infty.
\]
The main result of the paper is the following theorem.

\begin{theorem}
\label{th2}
Suppose that Assumption~(B) holds, the Blumenthal--Getoor index
satisfies $\beta< 2$ and $p > \beta$. If $w = 1$ assume that (B-log)
holds. Recall the notations \eqref{Aalpha} and~\eqref{hkz}. Then we
obtain the following cases:
\begin{enumerate}
\item\label{it:th2:1} When $\max_{0\leq z \leq l} \alpha_z <k-1/p$ and
condition~\eqref{nj} holds, then we have the stable convergence
\begin{align}
\label{part1.2} &n_j^{\alpha p}V(X,p;k)_{n_j} \stab\sum
_{z \in\mathcal{A}} |c_z|^p \sum
_{m : T_m \in[-\theta_z, 1 - \theta_z]} |\Delta L_{T_m}|^p
V_m^z
\\
&\text{with} \quad V_m^z = \sum
_{r\in\Z} \big|h_{k,z}\bigl(r+1-\{U_m+
\eta_z\} \bigr)\big|^p.
\nonumber
\end{align}
as $j \to\infty$, where $(U_m)_{m \geq1}$ is a sequence of i.i. $\mathcal
{U}(0,1)$-distributed variables independent of $L$.

\item\label{it:th2:2} Let $\alpha= \alpha_0 = \cdots= \alpha_l = k -
1/p$. Assume that the functions $f_z : \R_+ \to\R$ defined by $f_z(x)
= g(x) / |x - \theta_z|^{\alpha}$ are in $ C^k((\theta_z - \delta,
\theta_ z + \delta))$ for all $\delta< \max_{1 \leq j \leq l}(\theta_j
- \theta_{j - 1})$. If $1/p+1/w>1$, then we have
\begin{equation}
\label{part2.2} \frac{n^{\alpha p}}{\log(n)} V(X, p; k)_n \toop|q_{k, \alpha}
|^p \sum_{z = 0}^{l}
|c_z|^p (1 + \1_{\{z \geq1\}} ) \sum
_{m : T_m \in[-\theta
_z, 1 - \theta_z]} |\Delta L_{T_m}|^p,
\end{equation}
where the constant $q_{k,\alpha}$ has been introduced in Theorem~\ref
{th1}\ref{it:th1:2}.
\end{enumerate}
\end{theorem}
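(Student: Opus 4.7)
The overall strategy is to extend the jump-based decomposition behind Theorem~\ref{th1} to handle all singularities $\theta_0<\theta_1<\cdots<\theta_l$ at once. Writing $\Delta_{i,k}^{n}X=\int_{-\infty}^{i/n}h_{i,k}^{n}(s)\,\di L_s + (\text{terms involving }g_0)$ with
\[
h_{i,k}^{n}(s)=\sum_{j=0}^{k}(-1)^j\binom{k}{j}g\bigl((i-j)/n-s\bigr),
\]
the kernel $h_{i,k}^{n}(s)$ is large only when $(i-j)/n-s$ is close to some $\theta_z$, i.e.\ $s\approx i/n-\theta_z$. A jump of size $\Delta L_{T_m}$ therefore influences $\Delta_{i,k}^{n}X$ non-negligibly only when $i/n\in T_m+\theta_z+O(1/n)$ for some $z$, and at rate $n^{-\alpha}$ only the indices $z\in\mathcal{A}$ survive.

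For part~\ref{it:th2:1}, I would use the big-/small-jump split of \cite{BLP17}: decompose $L=L^{\epsilon}+M^{\epsilon}$, analyse the compound-Poisson piece, and pass $\epsilon\downarrow 0$. For each big jump $T_m$ and each $z\in\mathcal{A}$, the local expansion $g(t)\sim c_z|t-\theta_z|^{\alpha_z}$ and the change of variable $i=\lfloor n(T_m+\theta_z)\rfloor+1+r$ yield
\[
n^{\alpha}\Delta_{i,k}^{n}X\approx c_z\,\Delta L_{T_m}\,h_{k,z}\bigl(r+1-\{n(T_m+\theta_z)\}\bigr),
\]
so that $n^{\alpha p}\sum_{i}|\Delta_{i,k}^{n}X|^p$ converges to the $r$-sum defining $V_m^z$, which is finite because $p(k-\alpha)>1$. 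The family $(\{nT_m\})_m$ is asymptotically i.i.d.\ $\mathcal{U}(0,1)$ and independent of $\mathcal F$, so stable convergence is inherited; along the subsequence $n_j$ in \eqref{nj} one has $\{n_j(T_m+\theta_z)\}\to\{U_m+\eta_z\}$, which after summing over $z\in\mathcal{A}$ and over those $m$ with $T_m\in[-\theta_z,1-\theta_z]$ (the range where $i/n$ falls in $[0,1]$) produces the announced limit. Negligibility of the small-jump remainder, of cross-terms between distinct jumps in the same increment, and of contributions from $z\notin\mathcal{A}$ (which are of order $n^{-\alpha_z p}$ with $\alpha_z>\alpha$) follows from the $L^w$-moment estimates on $g^{(k)}$ in Assumption~(B)\ref{it:B:4}, transported from \cite{BLP17}.

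For part~\ref{it:th2:2}, the critical scaling $\alpha=k-1/p$ makes $|h_{k,z}(x)|^p\sim|q_{k,\alpha}|^p|x|^{-1}$ as $|x|\to\infty$, so the analogue of $V_m^z$ diverges logarithmically. Using the $C^k$-regularity of $f_z(x)=g(x)/|x-\theta_z|^{\alpha}$ postulated in the statement, a Taylor expansion around each $\theta_z$ identifies the leading term of $h_{i,k}^{n}(T_m)$ at scale $n^{-k}$ as $c_z q_{k,\alpha}(i/n-T_m-\theta_z)^{\alpha-k}$. Summing $|\cdot|^p$ over indices $i$ in a macroscopic window around $n(T_m+\theta_z)$ extracts a factor $\log(n)$ with coefficient $|c_z q_{k,\alpha}|^p|\Delta L_{T_m}|^p$; for interior singularities $\theta_z>0$ the window is two-sided, doubling this coefficient, while for the boundary point $\theta_0=0$ only $i/n>T_m$ contributes, producing the factor $(1+\1_{\{z\geq 1\}})$. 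Because the $\log(n)$-scale averaging washes out the fractional-part dependence on $\{n\theta_z\}$, no subsequence is needed and the mode of convergence drops to convergence in probability. The condition $1/p+1/w>1$ is precisely what forces the error terms stemming from small jumps and from lower-order Taylor remainders to be negligible compared to $\log(n)$.

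The main obstacle is the treatment of \emph{interior} singularities $\theta_z>0$, which have no counterpart in Theorem~\ref{th1}: a given increment may now straddle such a $\theta_z$ from either side, giving rise to the two-sided sum $\sum_{r\in\Z}$ in \eqref{part1.2} and the doubling factor in \eqref{part2.2}; and the deterministic offset $n\theta_z$ couples with the random jump times $T_m$ in a way that only stabilises along subsequences satisfying \eqref{nj}. Controlling cross-interactions between distinct singularities at the sharp rate $n^{-\alpha}$ and — in the critical case — keeping the Taylor remainders below the $\log(n)$ threshold on both sides of each interior $\theta_z$ is where the bulk of the technical work lies.
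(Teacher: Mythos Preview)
Your proposal is correct and follows essentially the same route as the paper: compound-Poisson reduction via a big/small-jump split, an $\Omega_\varepsilon$-type separation of singularity windows so that the $p$-th power of the sum over $z$ equals the sum of $p$-th powers, the local expansion around each $\theta_z$ together with the stable convergence of $(\{n_jT_m+n_j\theta_z\})_{z\in\mathcal A}$ to $(\{U_m+\eta_z\})_{z\in\mathcal A}$, and in the critical case the comparison of $\sum_{u}|h_{k,z}(u+\cdots)|^p$ with a harmonic sum to extract the $\log(n)$ factor (one-sided for $z=0$, two-sided for $z\geq1$). The paper packages the fractional-part step as a separate proposition giving the \emph{joint} stable convergence across all $z\in\mathcal A$ (needed so that the limits $V_m^z$ share the same $U_m$), and carries out the small-jump negligibility via an explicit $\varPhi_p$-integral bound $\chi_n(x)\leq K(|x|^p+|x|^2)$ split along the singularity windows; these are exactly the technical implementations of the steps you sketch.
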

We remark that the stable convergence in Theorem~\ref{th2}\ref
{it:th2:1} only holds along the subsequence $(n_j)_{j \geq1}$, which
is seen from the form of the limit in \eqref{part1.2} that depends on
$(\eta_z)$. The original statistic $n^{\alpha p} V(X, p; k)_{n}$ is
tight, but does not converge except when $\theta_z \in\N$ for all $z
\in\mathcal{A}$. On the other hand, in Theorem~\ref{th2}\ref{it:th2:2}
we do not require to consider a subsequence.

Notice that the interval $ [-\theta_z, 1 - \theta_z]$, which appears in
Theorem~\ref{th2}, is the set $[0, 1]$ shifted by $\theta_z$ to the
left. Given the discussion of Remark~\ref{rem1}, such a shift in the
limit is not really surprising. We recall that a similar phenomenon has
been discovered in \cite{GP14} in the context of Brownian
semi-stationary processes. These are stochastic processes $(Y_t)_{t
\geq0}$ defined by
\[
Y_t = \int_{-\infty}^{t} g(t - s)
\sigma_s \, \di W_s,
\]
where $W$ is a two-sided Brownian motion and $(\sigma_t)_{t \in\R}$ is
a c\'adl\'ag process. When the kernel function $g$ satisfies
conditions~(B)\ref{it:B:1} and~(B)\ref{it:B:2} along with some further
assumptions, which in particular ensure the existence of $Y_t$, the
authors have shown the following convergence in probability (see \cite
[Theorem 3.2]{GP14}):
\begin{equation}
\frac{1}{n \tau_n^2} V(Y, 2; k)_n \toop\sum
_{z \in\mathcal{A}} \pi_z \int_{-\theta_z}^{1 - \theta_z}
\sigma_s^2 \, \di s
\end{equation}
where $\tau_n^2 = \E[(\Delta_{k, k}^n G)^2]$ with $G_t = \int_{-\infty
}^{t} g(t - s) \, \di W_s$, and the probability weights $(\pi_z)_{z \in
\mathcal{A}}$ are given by
\[
\pi_z = \frac{c_z^2 \| h_{k, z}\|^2_{L^2(\R)}}{\sum_{z \in\mathcal{A}}
c_z^2 \| h_{k, z} \|^2_{L^2(\R)}}.
\]
Hence, we observe the same shift phenomenon in the integration region
as in Theorem~\ref{th2}.

\section{Proofs}
\label{sec3}

Throughout this section all positive constants are denoted by $C$
although they may change from line to line. We will divide the proof of
Theorem~\ref{th2} into several steps. First, we will show the
statements \eqref{part1.2} and~\eqref{part2.2} for a compound Poisson
process. In the second step we will decompose the jump measure of $L$
into jumps that are bigger than $\epsilon$ and jumps that are smaller
than $\epsilon$. The big jumps form a compound Poisson process and
hence the claim follows from the first step. Finally, we prove
negligibility of small jumps when $\epsilon\to0$.

We start with an important proposition.

\begin{proposition}
\label{prop1}
Let $T = (T_1, \ldots, T_d)$ be a stochastic vector with a density $v :
\R^d \to\R_+$. Suppose there exists an open convex set $A \subseteq\R
^d$ such that $v$ is continuously differentiable on $A$ and vanishes
outside of $A$. Then, under condition~\eqref{nj}, it holds that
\begin{equation}
\label{mainprop} \bigl(\{n_j T + n_j \theta_z
\}\bigr)_{z \in\mathcal{A}} \stab\bigl(\{U + \eta_z\}
\bigr)_{z \in\mathcal{A}} \quad\text{as } j \to\infty,
\end{equation}
where $\{x\}$ denotes the fractional parts of the vector $x \in\R^d$
and $x + a$, $a \in\R$, is componentwise addition. Here $U = (U_1,
\ldots, U_d)$ consists of i.i. $\mathcal{U}(0, 1)$-distributed random
variables defined on an extension of the space $(\varOmega, \mathcal{F},
\mathbb{P})$ and being independent of $\mathcal{F}$.
\end{proposition}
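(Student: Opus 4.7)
The plan is to reduce the multi-shift statement to the single stable convergence $\{n_j T\} \stab U$ with $U$ uniform on $[0,1)^d$ and independent of $\mathcal{F}$, and then to transfer the shifts $n_j \theta_z$ via the continuous mapping theorem for stable convergence together with the elementary identity $\{x+y\} = \{\{x\}+\{y\}\}$ and condition \eqref{nj}.

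For the single-shift step, I would use the characterization that stable convergence to a limit independent of $\mathcal{F}$ is equivalent to requiring that, for every bounded continuous $f : [0,1)^d \to \R$ and every bounded $\mathcal{F}$-measurable $Z$,
\[
\E\bigl[f(\{n_j T\})\,Z\bigr] \longrightarrow \E[f(U)]\,\E[Z].
\]
Because $\{n_j T\}$ is $\sigma(T)$-measurable, one may replace $Z$ by $\tilde g(T) := \E[Z \mid T]$, a bounded measurable function of $T$. Setting $\phi := \tilde g \cdot v \in L^1(\R^d)$ and splitting $\R^d$ into cubes of side $1/n_j$, the left-hand side rewrites as
\[
\int_{[0,1)^d} f(u)\,\rho_{n_j}(u)\,\di u, \qquad \rho_n(u) := n^{-d}\sum_{k \in \Z^d}\phi\bigl((k+u)/n\bigr).
\]
The claim then reduces to the $L^1([0,1)^d)$-convergence $\rho_n \to \int_{\R^d} \phi$, which I would establish by approximating $\phi$ by continuous compactly supported functions (for which the Riemann sums converge uniformly) together with the contraction $\|\rho_n\|_{L^1([0,1)^d)} \leq \|\phi\|_{L^1(\R^d)}$. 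The hypothesis that $v$ is $C^1$ on the open convex set $A$ and vanishes outside is used here to control the behaviour of $\phi$ near $\partial A$ and enable this approximation.

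For the joint statement, the componentwise identity
\[
\{n_j T + n_j \theta_z\} = \bigl\{\{n_j T\} + \{n_j \theta_z\}\,\1\bigr\}, \qquad z \in \mathcal{A},
\]
where $\1 \in \R^d$ is the all-ones vector, combined with $\{n_j \theta_z\} \to \eta_z$ from \eqref{nj}, implies that $\bigl(\{n_j T\},(\{n_j \theta_z\})_{z\in\mathcal{A}}\bigr)$ converges stably to $\bigl(U,(\eta_z)_{z \in \mathcal{A}}\bigr)$. The map $h(u,\zeta) = (\{u + \zeta_z \1\})_{z\in\mathcal{A}}$ is continuous off the set $\{u_i + \zeta_z \in \Z\}$, which has measure zero under the limit law because $U$ is absolutely continuous; the continuous mapping theorem for stable convergence then yields \eqref{mainprop}.

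The principal obstacle is the $L^1$-convergence of the periodization uniformly against arbitrary bounded $\mathcal{F}$-measurable test variables $Z$ rather than only against smooth functions of $T$; this is where the smoothness of $v$ on $A$ really enters, allowing the reduction to a continuous compactly supported integrand, after which the estimate becomes a standard Riemann-sum / Weyl-equidistribution computation. The joint transfer step is then essentially a torus-continuity argument with no further analytic content.
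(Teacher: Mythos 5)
Your proposal is correct, and its overall skeleton coincides with the paper's: first establish $\{n_j T\} \stab U$, then transfer the shifts via the identity $\{x+y\}=\{\{x\}+\{y\}\}$, condition \eqref{nj}, and the continuous mapping theorem for stable convergence (checking, exactly as you do, that the discontinuity set of $(u,\zeta)\mapsto(\{u+\zeta_z\mathbf{1}\})_z$ is null under the limit law). The second step is identical to the paper's. Where you genuinely diverge is in the proof of the key lemma $\{n_j T\}\stab U$. The paper tests against $C^1$ functions $f(x,u)$ vanishing outside a closed ball in $A\times\R^d$, writes both integrals as sums over cubes of side $\rho=1/n$, and uses the mean value theorem on $\phi(x,u)=f(x,u)v(x)$ (here the $C^1$-on-a-convex-open-set hypothesis is essential) to get the quantitative bound $D_\rho\leq K\rho$, then invokes Aldous--Eagleson's condition (D$''$). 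You instead use the mixing characterization $\E[f(\{n_jT\})Z]\to\E[f(U)]\E[Z]$, condition on $T$ to reduce $Z$ to a bounded function $\tilde g(T)$, and prove $L^1$-convergence of the periodization $\rho_n$ of $\phi=\tilde g\,v$ by approximating in $L^1(\R^d)$ with continuous compactly supported functions and using the contraction $\|\rho_n\|_{L^1([0,1)^d)}\leq\|\phi\|_{L^1(\R^d)}$. This is a sound and in fact more general argument: it works for an arbitrary density $v\in L^1$, since density of $C_c$ in $L^1$ is all that is used. Your closing remark that the $C^1$ hypothesis on $v$ is "where the smoothness really enters" is therefore slightly misleading --- your argument never uses it --- whereas the paper's argument does need it (in exchange for an explicit $O(\rho)$ rate). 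Either route establishes the proposition.
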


\begin{proof}
We first show the stable convergence
\begin{equation}
\label{stabconv} \{ nT \} \stab U.
\end{equation}
This statement has already been shown in \cite[Lemma 4.1]{BLP17}, but
we demonstrate its proof for completeness. Let $f:\R^d \times\R^d\to\R
$ be a $C^1$-function, which vanishes outside some closed ball in
$A\times\R^d$. We claim that there exists a finite constant $K > 0$
such that for all $\rho> 0$
\begin{equation}
\label{eq-est-f-g} D_\rho:= \Bigg| \int_{\R^d} f\bigl(x, \{
x/\rho\}\bigr) v(x) \, \di x - \int_{\R
^k} \biggl(\int
_{[0, 1]^d} f(x,u) \, \di u \biggr) v(x) \, \di x \Bigg| \leq K \rho.
\end{equation}
By \eqref{eq-est-f-g} used for $\rho= 1/n$ we obtain that
\begin{equation}
\label{sdkfh} \E\bigl[f\bigl(T, \{n T\}\bigr)\bigr] \longrightarrow\E\bigl[f(T,
U)\bigr]\quad\text{as } n \to \infty.
\end{equation}
Moreover, due to \cite[Proposition~2(D'')]{AE78}, \eqref{sdkfh} implies
the stable convergence\break $\{n T\} \stab U$ as $n\to\infty$. Thus, we need
to prove the inequality \eqref{eq-est-f-g}. Define $\phi(x,u) := f(x,
u) v(x)$. Then it holds by substitution that
\begin{align*}
\int_{\R^d} f\bigl(x, \{ x/\rho\}\bigr) v(x) \, \di x &= \sum
_{j \in\Z^d} \int_{(0, 1]^d}
\rho^d \phi(\rho j + \rho u, u) \, \di u
\end{align*}
and
\begin{align*}
\int_{\R^d} \biggl(\int_{[0, 1]^d}
f(x,u) \, \di u \biggr) v(x) \, \di x &= \sum_{j \in\Z^d}
\int_{[0, 1]^d} \biggl(\int_{(\rho j, \rho(j +
1)]} \phi(x, u) \,
\di x \biggr) \, \di u.
\end{align*}
Hence, we conclude that
\begin{align*}
D_\rho&\leq\sum_{j \in\Z^d} \int
_{(0, 1]^d} \Bigg|\int_{(\rho j,
\rho(j + 1)]} \phi(x, u) \, \di x -
\rho^d \phi(\rho j + \rho u , u) \Bigg| \, \di u
\\
&\leq\sum_{j \in\Z^d} \int_{(0, 1]^d} \int
_{(\rho j,\rho(j + 1)]} \big|\phi(x, u)- \phi(\rho j + \rho u, u) \big| \, \di x \, \di
u.
\end{align*}
Using that $A$ is convex and open, we deduce by the mean value theorem
that there exists a positive constant $K$ and a compact set $B\subseteq
\R^d\times\R^d$ such that for all $j\in\Z^d$, $x \in(\rho j, \rho(j
+ 1)]$ and $u \in(0,1]^d$ we have
\[
\big|\phi(x, u) - \phi(\rho j + \rho u, u) \big| \leq K \rho\1_B(x, u).
\]
Thus, $D_\rho\leq K \rho\int_{(0, 1]^d} \int_{\R^d} \1_{B} (x, u) \,
\di x \, \di u$, which shows \eqref{stabconv}.

Now, we are ready to prove the statement \eqref{mainprop}. By~\eqref
{stabconv} and condition~\eqref{nj} we conclude that
\[
\bigl(\{n_j T\}, \{n_j \theta_z \}
\bigr)_{z \in\mathcal{A}} \stab(U, \eta _z)_{z \in\mathcal{A}} \quad
\text{as } j \to\infty.
\]
Next, consider the map $f : \R^d \times\R^{l'} \to\R^{d \times l'}$,
where $l'$ denotes the cardinality of $\mathcal A$, given by
\[
f(x, y_1, \ldots, y_{l'}) = \bigl(\{x + y_1
\}, \ldots, \{x + y_{l'}\}\bigr).
\]
This map is discontinuous exactly in those points $x, y_1, \ldots,
y_{l'}$ for which $x_j + y_i \in\Z$ for some $i \in\{1, \ldots, l'\}$
and some $j \in\{1, \ldots, d\}$. Note that the probability of the
limiting variable $(U, \eta_z )_{z \in\mathcal{A}}$ lying in the
latter set is $0$. Hence, it follows from the continuous mapping
theorem for stable convergence that
\[
f\bigl(\{n_j T\}, \bigl(\{n_j \theta_z\}
\bigr)_{z \in\mathcal{A}}\bigr) \stab f\bigl(U,(\eta_z)_{z \in\mathcal{A}}
\bigr) = \bigl(\{U + \eta_z\}\bigr)_{z \in
\mathcal{A}}
\]
as $j \to\infty$. Since $x = \{x\} + \lfloor x \rfloor$ we have the
identity $\{x + y\} = \{\{x\} + \{y\}\}$ and the left-hand side becomes
\[
f\bigl(\{n_j T\}, \bigl(\{n_j \theta_z\}
\bigr)_{z \in\mathcal{A}} \bigr) = \bigl(\{n_jT + n_j
\theta_z \}\bigr)_{z \in\mathcal{A}},
\]
which concludes the proof of Proposition~\ref{prop1}.
\end{proof}
Now, we introduce the notation
\begin{equation}
\label{gin} g_{i,n}(x) = \sum_{j = 0}^{k}
(-1)^j \binom{k}{j} g\bigl((i - j)/n - x\bigr),
\end{equation}
and observe the identity
\[
\Delta_{i,k}^n X= \int_{\R}
g_{i, n}(s) \, \di L_s.
\]
The next lemma presents some estimates for the function $g_{i, n}$. Its
proof is a straightforward consequence of Assumption~(B) and the Taylor
expansion.

\begin{lemma}
\label{lem1}
Suppose that Assumption~(B) holds and let $z=1,\ldots, l$. Then there
exists an $N \in\N$ such that for all $n\geq N$ and $i \in\{k, \ldots
, n\}$ the following hold:
\begin{enumerate}
\item\label{it:lem1:1} $|g_{i,n}(x)| \leq C (|i/n - x - \theta
_z|^{\alpha_z} + n^{-\alpha_z} )$ for all $x\in[\frac{i-2k}{n} - \theta
_z, \frac{i + 2k}{n} - \theta_z]$.

\item\label{it:lem1:2} $|g_{i,n}(x)| \leq Cn^{-k}|(i-k)/n-x - \theta
_z|^{\alpha_z-k}$ for all $x \in(\frac{i}{n}-\delta- \theta_z,\tfrac
{i-k}{n} - \theta_z)$ if $\alpha_z - k < 0$.

\item\label{it:lem1:3} $|g_{i,n}(x)| \leq C n^{-k} |(i-k)/n - x - \theta
_z|^{\alpha_z - k}$ for all $x \in(\frac{i + k}{n} - \theta_z, \frac{i
- k}{n} + \delta- \theta_z)$ if $\alpha_z - k < 0$.

\item\label{it:lem1:4} $|h_{k,z}(x)| \leq|x - k|^{\alpha- k}$ for all
$x\geq k+1$ and $|h_{k,z}(x)| \leq|x + k|^{\alpha- k}$ for all $x
\leq-k - 1$, if $\alpha_z - k < 0$.

\item\label{it:lem1:5} For each $\varepsilon>0$ it holds that
\begin{align*}
n^k|g_{i,n}(s)| \1_{(-\infty, \frac{i}{n} - \varepsilon- \theta_l]}(s) &\leq
C_\varepsilon \bigl(\1_{[-\theta_l - \delta', 1 - \theta_l]}(s)
\\
&\quad+ \1_{(-\infty, -\theta_l - \delta')}(s) |g^{(k)}(-s)| \bigr).
\end{align*}
\end{enumerate}
Furthermore, similar estimates hold for $z = 0$ with obvious
adjustments that account for the fact that $g$ and $h_{k,0}$ are both
vanishing on $(- \infty, 0)$.
\end{lemma}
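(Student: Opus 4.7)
The plan is to split the five claims into two groups: parts (i) and (iv), where the convex hull of evaluation points straddles a singularity and one must estimate directly from the asymptotic behaviour of $g$ (respectively $|\cdot|^{\alpha_z}$); and parts (ii), (iii), (v), where the convex hull avoids the singular set, so that the classical $k$-th order finite-difference identity combined with Taylor's theorem delivers a derivative-based bound.

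The common ingredient for (ii), (iii), (v) is to rewrite $g_{i,n}(x)$, up to sign, as a $k$-th forward difference of $g$ with step $-1/n$ based at $u = i/n - x$. Whenever $g \in C^k$ on the closed interval $J_{i,n,x} := [(i-k)/n - x, i/n - x]$, the mean-value form of the finite difference gives
\[
g_{i,n}(x) = n^{-k} g^{(k)}(\xi_x), \qquad \xi_x \in J_{i,n,x}.
\]
For parts (ii) and (iii) one picks $N$ large enough that, for all $n \geq N$, $J_{i,n,x}$ lies entirely inside the relevant one-sided $\delta$-neighborhood of $\theta_z$, where the local bound $|g^{(k)}(t)| \leq K|t - \theta_z|^{\alpha_z - k}$ of Assumption~(B)(iv) applies. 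Since $\alpha_z - k < 0$ makes this bound a decreasing function of the distance to $\theta_z$, one evaluates at the endpoint of $J_{i,n,x}$ closest to $\theta_z$, which is precisely the denominator appearing in each stated inequality. For part~(v) the same identity is combined with the monotonicity of $|g^{(k)}|$ on $(\theta_l + \delta', \infty)$: when $s < -\theta_l - \delta'$ the interval $J_{i,n,s}$ sits in $(\theta_l + \delta', \infty)$ and $|g^{(k)}(\xi_s)| \leq |g^{(k)}((i-k)/n - s)| \leq |g^{(k)}(-s)|$ (using $(i-k)/n - s \geq -s$ for $i \geq k$), whereas on the bounded window $[-\theta_l - \delta', 1 - \theta_l]$ the function $|g^{(k)}|$ is continuous on a compact set whose distance to the singular set is bounded below in terms of $\varepsilon$, and therefore uniformly bounded by a constant $C_\varepsilon$.

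Part (i) is the case in which $J_{i,n,x}$ contains the singularity, so Taylor is unavailable. Here I would argue directly: for $n$ large and $x$ in the prescribed window, every $(i-j)/n - x$ lies in a small neighborhood of $\theta_z$ on which Assumption~(B)(ii) may be upgraded to the two-sided inequality $|g((i-j)/n - x)| \leq C|(i-j)/n - x - \theta_z|^{\alpha_z}$. The triangle inequality $|(i-j)/n - x - \theta_z| \leq |i/n - x - \theta_z| + j/n$ together with the elementary $(a+b)^{\alpha_z} \leq C_{\alpha_z}(a^{\alpha_z} + b^{\alpha_z})$ and summation over $j = 0,\ldots,k$ yield the claim. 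Part (iv) follows by the same pattern: for $|x| \geq k+1$ the function $t \mapsto |t|^{\alpha_z}$ is smooth on the branch sampled by $h_{k,z}$, so the finite-difference Taylor identity applied to that smooth branch gives the required polynomial tail bound, with the stated factor obtained by evaluating the resulting $\xi$-dependent derivative at the worst endpoint.

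The main delicate point is accounting for the "worst" $\xi_x$ in the convex hull. Because $\alpha_z - k$ is negative, the pointwise derivative bound $K|\cdot - \theta_z|^{\alpha_z - k}$ is maximized at the endpoint of $J_{i,n,x}$ closest to $\theta_z$, and it is this endpoint that must match the distance appearing in each inequality; one should simply check, case by case for (ii) and (iii), which endpoint is nearest to $\theta_z$. Everything else is bookkeeping: the choice of $N$ (and the constants $\delta$, $\delta'$ furnished by Assumption~(B)) is exactly what keeps the evaluation intervals inside the regions where the local bounds (B)(ii) and (B)(iv) apply.
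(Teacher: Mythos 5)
Your argument is correct and is exactly the route the paper intends: the paper offers no written proof, saying only that the lemma is ``a straightforward consequence of Assumption~(B) and the Taylor expansion'', and your finite-difference/mean-value treatment of (ii), (iii), (v) together with the direct use of (B)(i)--(ii) for parts (i) and (iv) supplies precisely those details. The one bookkeeping wrinkle is in (iii), where the endpoint of $[(i-k)/n - x,\ i/n - x]$ nearest to $\theta_z$ is $i/n - x$ rather than the $(i-k)/n - x$ appearing in the stated bound; however, on the given range $|(i-k)/n - x - \theta_z| = |i/n - x - \theta_z| + k/n \le 2\,|i/n - x - \theta_z|$, so the two distances are comparable and the discrepancy is absorbed into the constant $C$.
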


\subsection{Proof of Theorem~\ref{th2} in the compound Poisson case}
\label{sec3.1}
In this subsection we assume that $L$ is a compound Poisson process.
Recall that $(T_m)_{m \geq1}$ denotes the jump times of $L$. Let
$\varepsilon> 0$ and consider $n_j \in\N$ such that $\varepsilon n_j
> 4k$. Define the set\querymark{Q3}
\begin{align*}
\varOmega_\varepsilon= \bigl\{&\omega\in\varOmega: \text{for all $m \in\N $
with $T_m(\omega) \in[-\theta_l, 1]$ it holds} 
\\
&|T_{m}(\omega) - T_i(\omega)| > 2\varepsilon,
T_m(\omega) + \theta_{z} - \theta_{z'} \notin
\bigl[T_i(\omega) - 2 \varepsilon, T_i(\omega) + 2
\varepsilon\bigr]
\\
&\forall i \neq m \ \forall z, z' \in\{0, \ldots, l\} \text{ and $
\Delta L_s(\omega) = 0$}
\\
&\text{for all $s \in[- \varepsilon- \theta_z, -
\theta_z + \varepsilon ] \cup[1 - \varepsilon- \theta_z,
1 -\theta_z + \varepsilon]$} \ \forall z\in\{0, \ldots, l\} \bigr\}.
\end{align*}
Roughly speaking, on the set $\varOmega_\varepsilon$ the jump times in
$[-\theta_l, 1]$ are well separated, their increments are outside a
small neighbourhood of $\theta_{z} - \theta_{z'}$, and there are no
jumps around the fixed points $-\theta_z$ and $1 - \theta_z$. In
particular, it obviously holds that $\mathbb{P}(\varOmega_\varepsilon) \to
1$ as $\varepsilon\to0$.

Throughout the proof we assume without loss of generality that $0 \in
\mathcal A$. Now, we introduce a decomposition, which is central for
the proof. Recalling the definition of $g_{i, n}$ at \eqref{gin}, we
observe the identity
\begin{equation}
\label{maindec} \Delta_{i, k}^n X = \sum
_{z \in\mathcal{A}} M_{i, n, \varepsilon, z} + \sum_{z \in\mathcal{A}^c}
M_{i, n, \varepsilon, z} + R_{i, n,
\varepsilon},
\end{equation}
where for $z = 1, \ldots, l$
\begin{align*}
M_{i, n, \varepsilon, 0} &= \int_{\frac{i}{n} - \varepsilon}^{\frac{i}{n}}g_{i, n}(s) \, \di L_s,
\qquad
M_{i, n, \varepsilon, z} = \int_{\frac{i}{n} - \theta_z - \varepsilon}^{\frac{i}{n} - \theta_z + \frac{\lfloor n\varepsilon\rfloor}{n}} g_{i,n}(s) \, \di L_s\\
R_{i ,n, \varepsilon} &= \int_{-\infty}^{\frac{i}{n} - \theta_l - \varepsilon}
g_{i, n}(s) \, \di L_s + \sum_{z = 1}^{l}\int_{\frac
{i}{n} - \theta_z + \frac{\lfloor n\varepsilon\rfloor}{n}}^{\frac
{i}{n} - \theta_{z - 1} - \varepsilon} g_{i, n}(s) \, \di
L_s.
\end{align*}
It turns out that the first term $\sum_{z \in\mathcal{A}} M_{i, n,
\varepsilon, z}$ is dominating, while the other two are negligible.

\subsubsection{Main terms in Theorem~\ref{th2}\ref{it:th2:1}}
\label{sec3.1.2}

In this subsection we consider the dominating term in the decomposition
\eqref{maindec}. We want to prove that, on $\varOmega_\varepsilon$, then
for $j \to\infty$
\begin{equation}
\label{toshow} n_j^{\alpha p} \sum_{i = k}^{n_j}
\Bigg| \sum_{z \in\mathcal{A}} M_{i, n_j, \varepsilon, z} \Bigg|^p \stab
\sum_{z \in\mathcal A} |c_z|^p \sum
_{m : T_m \in[-\theta_z, 1 - \theta_z]} |\Delta L_{T_m}|^p
V_m^z,
\end{equation}
where the limit has been introduced in \eqref{part1.2}. Let us fix an
index $z \in\mathcal{A}$. Then, on $\varOmega_\varepsilon$, for each jump
time $T_m \in(-\theta_z, 1 - \theta_z]$ there exists a unique random
variable $i_{m, z} \in\N$ such that
\[
T_m \in \biggr(\frac{i_{m, z} - 1}{n} - \theta_z,
\frac{i_{m, z}}{n} - \theta_z \biggl].
\]
We also observe the following implication, which follows directly from
the definition of the set $\varOmega_\varepsilon$:\querymark{Q4}
\[
\text{On } \varOmega_\varepsilon, \text{ if } M_{i,n,\varepsilon,z} \neq 0
\text{ for some } z \in\mathcal{A} \implies M_{i,n,\varepsilon,z'} = 0 \text{ for any }
z'\neq z \text{ in } \mathcal{A}.
\]
Indeed, this is the consequence of the definition of the term $M_{i, n,
\varepsilon, z}$ and the statement
\[
T_m(\omega) + \theta_z - \theta_{z'} \notin
\bigl[T_{m'}(\omega) - 2\varepsilon, T_{m'}(\omega) + 2
\varepsilon\bigr] \ \forall m'\neq m \ \forall z, z'
\in\{0, \ldots, l\},
\]
which holds on $\varOmega_\varepsilon$. Hence, we conclude that
\[
n^{\alpha p}\sum_{i = k}^{n} \Bigg|\sum
_{z \in\mathcal{A}} M_{i, n,
\varepsilon, z} \Bigg|^p =
n^{\alpha p} \sum_{z \in\mathcal{A}} \sum
_{i = k}^{n} | M_{i, n, \varepsilon, z} |^p
\]
on $\varOmega_\varepsilon$, and we obtain the representation
\begin{align}
&n^{\alpha p}\sum_{i = k}^{n}
|M_{i, n, \varepsilon, z}|^p = V_{n,
\varepsilon, z} \quad\text{with}
\nonumber
\\
\label{Vnz} & V_{n, \varepsilon, z} = n^{\alpha p} \sum
_{m : T_m \in
(-\theta_z, 1 - \theta_z]} |\Delta L_{T_m}|^p \sum
_{u = -\lfloor
n\varepsilon\rfloor}^{\lfloor n\varepsilon\rfloor+ v_m^z} |g_{i_{m,
z} + u, n}(T_m)|^p,
\end{align}
where $v_m^z$ are random variables taking values in $\{-2, -1, 0\}$
that are measurable with respect to $T_m$. If $z = 0$ then the sum
above is one-sided, i.e. from $u = 0$ to $\lfloor n\varepsilon\rfloor
$, cf. \cite[Eq.~(4.2)]{BLP17}. Next, we observe the identity
\[
\{nT_m + n \theta_z\} = nT_m + n
\theta_z - \lfloor nT_m + n \theta_z
\rfloor= nT_m + n \theta_z - (i_{m, z} -1).
\]
Due to Assumption~(B), we can write $g(x)= c_z|x - \theta_z|^{\alpha}
f(x)$ with $f(x) \to1$ as $x \to\theta_z$, for any $z\in\mathcal{A}$
(for $\theta_0 = 0$ we need to replace $|x|^{\alpha}$ by $x_+^{\alpha
}$). This allows us to decompose
\begin{align}
&n^{\alpha} g \biggl(\frac{i_{m, z} + u - r}{n} - T_m \biggr)\nonumber\\
&\quad = c_z n^{\alpha} \Big|\frac{i_{m, z} + u -r}{n} - T_m - \theta_z \Big|^{\alpha} f \biggl(\frac{i_{m, z} + u -r}{n} - T_m \biggr)\nonumber\\
&\quad = c_z | u - r + i_{m,z} - nT_m - n\theta_z |^{\alpha} f \biggl(\frac{u-r}{n} +n^{-1} (i_{m,z} - n T_m) \biggr)\nonumber\\
&\quad = c_z | u - r + 1 - \{nT_m + n\theta_z\} |^{\alpha} f \biggl(\frac{u-r}{n} +n^{-1} \bigl(n\theta_z + 1 - \{n T_m + n\theta_z\}\bigr) \biggr)\nonumber\\
&\quad = c_z | u - r + 1 - \{nT_m + n \theta_z\}|^{\alpha} f \biggl(\frac{u - r + 1 - \{n T_m + n\theta_z\}}{n} +\theta_z \biggr),\label{gdecomp}
\end{align}
for any $m \in\N$, $0 \leq r \leq k$ and $z \in\mathcal{A}$. Since
$f(x) \to1$ as $x \to\theta_z$, we find that for any $d \in\N$
\begin{align*}
& \biggl(n_j^{\alpha} g \biggl(\frac{i_{m, z} + u - r}{n_j} -
T_m \biggr) \biggr)_{|u|, m \leq d,\, 0\leq r \leq k,\, z \in\mathcal A}
\\
&\stab\bigl(c_z |u-r + 1 - \{U_m +
\eta_z\}|^{\alpha}\bigr)_{|u|, m \leq d,\, 0
\leq r \leq k,\, z \in\mathcal A},
\end{align*}
which holds due to condition~\eqref{nj}, decomposition~\eqref{gdecomp}
and Proposition~\ref{prop1} (for $\theta_0 = 0 $ we again need to
replace $|x|^{\alpha}$ by $x_+^{\alpha}$). Hence, by continuous mapping
theorem for stable convergence we deduce that
\begin{equation}
\label{gstab} \bigl(n_j^{\alpha} g_{i_{m,z} +u, n_j}(T_m)
\bigr)_{|u|, m \leq d,\, z \in
\mathcal A} \stab\bigl(c_z h_{k, z} \bigl(1 + u -
\{U_m + \eta_z\}\bigr) \bigr)_{|u|, m
\leq d,\, z \in\mathcal A}
\end{equation}
as $j \to\infty$,
which is a key result of the proof. We now define a truncated version
of $V_{n, \varepsilon, z}$ introduced in \eqref{Vnz}:
\[
V_{n,\varepsilon,z,d} := n^{\alpha p} \sum_{\substack{m \leq d: \\ T_m
\in(-\theta_z, 1 - \theta_z]}} |
\Delta L_{T_m}|^p \Biggl(\sum_{u =
-\lfloor\varepsilon d \rfloor}^{\lfloor\varepsilon d \rfloor+ v_m^z}
|g_{i_{m, z} + u, n}(T_m)|^p \Biggr).
\]
From \eqref{gstab} and properties of stable convergence we conclude that
\begin{equation}
\label{vndconv} (V_{n_j, \varepsilon, z, d} )_{z \in\mathcal{A}} \stab(V_{\varepsilon
, z, d})_{z \in\mathcal{A}}
\quad\text{as } j \to\infty,
\end{equation}
where
\[
V_{\varepsilon, z, d} = |c_z|^p \sum
_{\substack{m \leq d : \\ T_m \in
(-\theta_z, 1 - \theta_z]}} |\Delta L_{T_m}|^p \Biggl(\sum
_{u =
-\lfloor\varepsilon d \rfloor}^{\lfloor\varepsilon d \rfloor+ v_m^z} \big|h_{k,z} \bigl(1+u -
\{U_m +\eta_z\}\bigr)\big|^p \Biggr).
\]
Applying a monotone convergence argument, we deduce the almost sure convergence
\begin{equation}
\label{domconv} V_{\varepsilon, z, d} \uparrow V_z =
|c_z|^p \sum_{T_m \in(-\theta_z,
1 - \theta_z]} |\Delta
L_{T_m}|^p \biggl(\sum_{u \in\Z}
\big|h_{k,z} \bigl(1 + u - \{U_m + \eta_z\}
\bigr)\big|^p \biggr)
\end{equation}
as $d \to\infty$, where the second sum on the right-hand side is
finite, since $|h_{k, z}(x)| \leq C|x|^{\alpha- k}$ for large enough
$|x|$ and all $z \in\mathcal A$, and $\alpha< k - 1/p$. In view of
\eqref{vndconv} and \eqref{domconv}, we are left to prove the convergence
\[
\lim_{d \to\infty} \limsup_{n \to\infty} |
V_{n, \varepsilon, z, d} - V_{n, \varepsilon, z} | = 0
\]
on $\varOmega_{\varepsilon}$. Set $K_d = \sum_{m > d : T_m \in(-\theta_z,
1 - \theta_z]} |\Delta L_{T_m}|^p$ and observe that $K_d \to0$ as $d
\to\infty$, since $L$ is a compound Poisson process. Due to Lemma~\ref
{lem1} we conclude that $|n^{\alpha} g_{i, n}(x)| \leq C \min\{1, |i/n
- x|^{\alpha- k}\}$ and thus
\[
|V_{n, \varepsilon, z, d} - V_{n, \varepsilon, z} | \leq C \biggl(K_d + \sum
_{|u| > \lfloor\varepsilon d \rfloor} |u|^{p(\alpha- k)} \biggr) \quad\text{for all
} z \in\mathcal{A},
\]
and the latter converges to $0$ almost surely as $d \to\infty$,
because $\alpha< k - 1/p$. Consequently, we have shown \eqref{toshow}.
\qed

\subsubsection{Main terms in Theorem~\ref{th2}\ref{it:th2:2}}
\label{sec3.1.3}
We start with a simple lemma.

\begin{lemma}
\label{lem2}
Let $(a_i)_{i \in\N}$ be a sequence of positive real numbers such that\break
$\lim_{i \to\infty} ia_i =1$. Then it holds that
\[
\lim_{n \to\infty} \frac{1}{\log(n)} \sum
_{i = 1}^{cn} a_i =1
\]
for any fixed $c \in\N$.
\end{lemma}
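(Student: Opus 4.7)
My plan is to prove this via a standard Cesàro-type argument, exploiting that $ia_i \to 1$ forces $a_i \sim 1/i$ and that the harmonic partial sums grow like $\log n$.

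First I would write $a_i = \frac{1}{i} + \frac{\varepsilon_i}{i}$ where $\varepsilon_i := i a_i - 1$, so that the hypothesis becomes $\varepsilon_i \to 0$. Then I split
\[
\sum_{i=1}^{cn} a_i = \sum_{i=1}^{cn} \frac{1}{i} + \sum_{i=1}^{cn} \frac{\varepsilon_i}{i}.
\]
For the first piece, the classical estimate $\sum_{i=1}^{N} 1/i = \log N + \gamma + o(1)$ with $N = cn$ gives
\[
\frac{1}{\log n} \sum_{i=1}^{cn} \frac{1}{i} = \frac{\log(cn) + \gamma + o(1)}{\log n} = \frac{\log c + \log n + \gamma + o(1)}{\log n} \longrightarrow 1
\]
as $n \to \infty$, since $c$ is a fixed constant.

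For the error term, fix $\delta > 0$. Since $\varepsilon_i \to 0$ there is $N_\delta \in \N$ such that $|\varepsilon_i| < \delta$ for all $i > N_\delta$. For $n$ large enough that $cn > N_\delta$,
\[
\Bigg| \sum_{i=1}^{cn} \frac{\varepsilon_i}{i} \Bigg| \leq \sum_{i=1}^{N_\delta} \frac{|\varepsilon_i|}{i} + \delta \sum_{i=N_\delta + 1}^{cn} \frac{1}{i} \leq C_\delta + \delta\bigl(\log(cn) - \log N_\delta\bigr),
\]
where $C_\delta$ is a constant depending only on $\delta$. Dividing by $\log n$ and letting $n \to \infty$ yields
\[
\limsup_{n \to \infty} \frac{1}{\log n}\Bigg| \sum_{i=1}^{cn} \frac{\varepsilon_i}{i} \Bigg| \leq \delta.
\]
Since $\delta > 0$ is arbitrary, the limsup is $0$, so the error contribution vanishes. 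Combining the two pieces yields the desired limit $1$.

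There is no real obstacle here; the only point that requires attention is isolating the fixed-size head of the series (indices up to $N_\delta$, where $\varepsilon_i$ need not be small) from the tail, so that the $\log n$ normalization kills the head but the tail contributes at most $\delta$. The factor $c$ is harmless because $\log(cn)/\log(n) \to 1$.
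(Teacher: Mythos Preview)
Your proof is correct and follows essentially the same approach as the paper: write $|a_i - i^{-1}| \leq \epsilon\, i^{-1}$ for $i$ beyond some threshold, split off the finite head, use the harmonic-sum asymptotic $\sum_{i=1}^{cn} i^{-1}/\log(n) \to 1$, and let $\epsilon \to 0$. The only cosmetic difference is that you phrase the decomposition via $\varepsilon_i := i a_i - 1$ and invoke the Euler--Mascheroni expansion explicitly, whereas the paper simply cites the harmonic limit directly.
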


\begin{proof}
Due to the assumption of the lemma, we have that $(a_i)_{i \in\N}$ is
a bounded sequence and for each $\epsilon> 0$ there exists an $N =
N(\epsilon)$ with
\[
|a_i - i^{-1}| \leq\epsilon i^{-1} \quad
\text{for all } i \geq N.
\]
It obviously holds that $\lim_{n \to\infty} \sum_{i = 1}^{cn} i^{-1} /
\log(n) = 1$. On the other hand, we obtain that
\[
\limsup_{n \to\infty} \frac{1}{\log(n)} \sum
_{i = N}^{cn} |a_i - i^{-1}|
\leq\epsilon\limsup_{n \to\infty} \frac{1}{\log(n)} \sum
_{i
= 1}^{cn} i^{-1} = \epsilon.
\]
Since $\epsilon> 0$ is arbitrary, we conclude the statement of
Lemma~\ref{lem2}.
\end{proof}
Now, we will again use the decomposition \eqref{Vnz}, which
holds on $\varOmega_{\varepsilon}$, and treat each term $V_{n, \varepsilon
, z}$ separately. We consider $z \geq1$ and we will show that
\begin{equation}
\label{apprbyh} \frac{1}{\log(n)}\sum_{u = -\lfloor n\varepsilon\rfloor}^{\lfloor
n\varepsilon\rfloor+ v_m^z}
\big|n^{\alpha}g_{i_{m, z} + u, n}(T_m) - c_z
h_{k, z}\bigl(u + 1 - \{nT_m + n \theta_z\}
\bigr) \big|^p \to0
\end{equation}
as $n \to\infty$, for any $m \in\N$. Let us first consider the case
$|u| \geq k$. Recall that we have assumed that $f_z(x) = g(x) / |x -
\theta_z|^{\alpha}$ is in $C^k((\theta_z - \delta, \theta_z + \delta))$
for any $\delta< \max_{1 \leq j \leq l}(\theta_j - \theta_{j- 1})$.
Now, due to identity \eqref{gdecomp} and Taylor expansion of order $k$,
we obtain the bound (cf. \cite[Eqs.~(4.8) and~(4.9)]{BP17})
\[
\sum_{u = -\lfloor n\varepsilon\rfloor}^{\lfloor n\varepsilon\rfloor
+ v_m^z} \big|n^{\alpha}g_{i_{m,z} + u,n}(T_m)
- c_z h_{k,z}\bigl(u+1-\{ nT_m + n
\theta_z\}\bigr) \big|^p \1_{\{|u| \geq k \}} \leq C,
\]
for any $\varepsilon< \max_{1 \leq j \leq l}(\theta_j - \theta_{j -
1})$. Since $|n^{\alpha} g_{i_{m, z} + u, n}(T_m)|$ is bounded for any
$|u| < k$ due to Lemma~\ref{lem1}, we deduce the convergence in \eqref{apprbyh}.

Next, for large enough $|u|$ we observe the bounds
\[
|q_{k, \alpha}|^p a_u \leq\big|h_{k, z}
\bigl(u + 1 - \{nT_m + n \theta_z\}\bigr)\big|^p
\leq|q_{k, \alpha}|^p a_{u - k - 1} \quad\text{where }
a_u = |u|^{-1}.
\]
Hence, by Lemma~\ref{lem2}, we conclude the convergence
\begin{equation}
\label{convforh} \frac{1}{\log(n)}\sum_{u = -\lfloor n\varepsilon\rfloor}^{\lfloor
n\varepsilon\rfloor+ v_m^z}
\big| h_{k, z}\bigl(u + 1 - \{nT_m + n \theta_z\}
\bigr) \big|^p \to2|q_{k, \alpha}|^p \quad\text{as } n\to
\infty.
\end{equation}
The same statement holds for $z=0$, but the limit becomes $|q_{k,\alpha
}|^p$, since in this setting the sum is one-sided.
We set $\| x\|_p^{p} = \sum_{i=1}^m |x_i|^p$ for any $x \in\R^m$ and
$p>0$, and recall that $\| x\|_p$ is a norm for $p \geq1$. It holds that
\begin{equation}
\label{ineq} %
\begin{aligned} |\| x\|_p^p -
\| y\|_p^p| &\leq\| x - y \|_p^p
\quad\text{ when } p \in(0, 1],
\\
|\| x\|_p - \| y\|_p| &\leq\| x - y\|_p
\quad\text{ when } p > 1. \end{aligned} %
\end{equation}
By \eqref{apprbyh}, \eqref{convforh} and~\eqref{ineq}, and taking into
account the definition of $V_{n, \varepsilon, z}$ at \eqref{Vnz}, we
readily deduce the convergence
\[
\frac{V_{n, \varepsilon,z}}{\log(n)} \toop|q_{k, \alpha} c_z|^p (1 +
\1 _{\{z\geq1\}}) \sum_{m : T_m \in[-\theta_z, 1 - \theta_z]} |\Delta
L_{T_m}|^p
\]
as $n \to\infty$, and hence
\[
n^{\alpha p}\sum_{i = k}^{n} \Bigg|\sum
_{z = 0}^{l} M_{i, n,
\varepsilon, z}
\Bigg|^p \toop|q_{k,\alpha} |^p \sum
_{z = 0}^l |c_z|^p (1 +
\1_{\{z \geq1\}} ) \sum_{m : T_m \in[-\theta_z, 1- \theta
_z]} |\Delta
L_{T_m}|^p
\]
as $n \to\infty$, on $\varOmega_{\varepsilon}$.\qed

\subsubsection{Negligible terms}
\label{sec3.1.1}
Due to inequalities at \eqref{ineq}, it suffices to show that on $\varOmega
_{\varepsilon}$
\begin{equation}
\label{neglconv} a_n \sum_{i = k}^{n}
|R_{i, n, \varepsilon}|^p \toop0 \quad\text {and} \quad a_n
\sum_{i = k}^{n} |M_{i, n, \varepsilon,z}|^p
\toop0 \quad\text{for } z \in \mathcal{A}^c,
\end{equation}
as $n \to\infty$, where $a_n = n^{\alpha p}$ in Theorem~\ref{th2}\ref
{it:th2:1} and $a_n = n^{\alpha p}/ \log(n)$ in Theorem~\ref{th2}\ref
{it:th2:2}, 
and this will prove that these terms do not affect the limits in
Theorem~\ref{th2}. At this stage we notice that outside the singularity
points the kernel function $g$ satisfies the same properties under
Assumption~(B) (resp. Assumption~(B-log)) as under Assumption~(A)
(resp. Assumption~(A-log)). Consequently, we can apply the estimates
for the term $R_{i, n, \varepsilon}$ derived in \cite[Eqs.~(4.8)
and~(4.12)]{BLP17} and \cite[Section 4]{BP17} under conditions~(A) and~(A-log)
\begin{align*}
\sup_{n \in\N, i = k, \ldots, n} n^k |R_{i, n, \varepsilon}| &< \infty
\text{ almost surely if } w \in(0, 1],
\\
\sup_{n \in\N, i = k, \ldots, n} \frac{n^k |R_{i, n, \varepsilon
}|}{(\log(n))^q} &< \infty\text{ almost
surely if } w \in(1,2],
\end{align*}
where $q$ is determined via $1 / q + 1 / w = 1$, since $R_{i, n,
\varepsilon}$ is only affected by the function $g$ outside the
singularity points $\theta_z$. We readily conclude the first
convergence at \eqref{neglconv} in the setting of Theorem~\ref{th2}\ref
{it:th2:1}, because $\alpha<k-1/p$. It also holds in the setting of
Theorem~\ref{th2}\ref{it:th2:2}, where for $w \in(1,2]$ we use the
assumption that $1 / p + 1 / w > 1$.

Now, we show the second statement of \eqref{neglconv}, which is only
relevant in the setting of Theorem~\ref{th2}\ref{it:th2:1}. Since
$\alpha_z < k -1/p$ for all $z$, we can apply to $\sum_{i = k}^{n}
|M_{i, n, \varepsilon, z}|^p$, $z \in\mathcal{A}^c$, the same
techniques as for $\sum_{i = k}^{n} |M_{i, n, \varepsilon, z}|^p$, $z
\in\mathcal{A}$. Hence, using the same methods as in Section~\ref
{sec3.1.2}, we conclude that on $\varOmega_{\varepsilon}$
\[
n^{\alpha p} \sum_{i = k}^{n}
|M_{i, n, \varepsilon, z}|^p = O_{\mathbb
{P}} \bigl(n^{p(\alpha- \alpha_z)}
\bigr) \quad\text{for all } z \in \mathcal{A}^c,
\]
where the notation $Y_n = O_{\mathbb{P}} (a_n)$ means that the sequence
$a_n^{-1} Y_n$ is tight. Since $\alpha_z > \alpha$ for all $z \in
\mathcal{A}^c$, we obtain the second statement of \eqref{neglconv}. The
results of Sections~\ref{sec3.1.2}--\ref{sec3.1.1} and the fact that
$\varOmega_{\varepsilon} \uparrow\varOmega$ as $\varepsilon\to0$ imply the
assertion of Theorem~\ref{th2} in the compound Poisson case.\qed

\subsection{Proof of Theorem~\ref{th2} in the general case}
\label{sec3.2}
Let now $(L_t)_{t\in\R}$ be a general symmetric pure jump L\'evy
process with Blumenthal--Getoor index $\beta$. We denote by $N$ the
corresponding Poisson random measure defined by $N(A) := \#\{t \in\R:
(t, \Delta L_t) \in A\}$ for all measurable $A \subseteq\R\times(\R
\setminus\{0\})$. Next, we introduce the process
\[
X_t(m) = \int_{(-\infty, t] \times[-\frac{1}{m}, \frac{1}{m}]} x \bigl(g(t - s) -
g_0(-s) \bigr) \, N(\di s, \di x),
\]
which only involves small jumps of $L$. We will prove that
\begin{equation}
\label{smalljumps} \lim_{m \to\infty} \limsup_{n \to\infty}
\mathbb{P} \bigl( a_n V\bigl(X(m), p; k\bigr)_n > \epsilon
\bigr) = 0 \quad\text{for any } \epsilon>0,
\end{equation}
where $a_n = n^{\alpha p}$ in Theorem~\ref{th2}\ref{it:th2:1} and $a_n
= n^{\alpha p}/ \log(n)$ in Theorem~\ref{th2}\ref{it:th2:2}. First, due
to Markov's~inequality and the stationary increments of $X_t(m)$, it
follows that
\[
\mathbb{P}\bigl(a_n V\bigl(X(m), p; k\bigr)_n > \epsilon
\bigr) \leq\epsilon^{-1} a_n \sum
_{i = k}^{n} \E\bigl[| \Delta_{i, k}^n
X(m) |^p\bigr] \leq\epsilon^{-1} b_n \E\bigl[|
\Delta_{k, k}^n X(m) |^p\bigr],
\]
where $b_n = n a_n$. Hence it is enough to prove that
\begin{equation}
\label{eq:Ynm} \lim_{m \to\infty} \limsup_{n \to\infty} \E
\bigl[| Y_{n, m} |^p\bigr] = 0 \quad \text{where} \quad
Y_{n, m} = b_n^{1 / p} \Delta_{k, k}^n
X(m).
\end{equation}
Notice the representation
\[
Y_{n, m} = \int_{(-\infty, \frac{k}{n}] \times[-\frac{1}{m}, \frac
{1}{m}]} \bigl(b_n^{1/p}
g_{k, n}(s)\bigr) x \, N(\di s, \di x).
\]
Using this together with \cite[Theorem~3.3]{RR89}, \eqref{eq:Ynm} will
follow if
\begin{align*}
&\lim_{m \to\infty} \limsup_{n \to\infty}
\xi_{n, m} = 0 \quad\text {where} \quad\xi_{n, m} = \int
_{| x | \leq\frac{1}{m}} \chi_n(x) \, \nu(\di x) \quad\text{and}
\\
&\chi_n(x) = \int_{-\infty}^{\frac{k}{n}} \bigl( |
b_n^{1 / p} g_{k,
n}(s) x |^p
\1_{\{| b_n^{1 / p} g_{k,n}(s) x | \geq1 \}}
\\
&\quad\ \qquad+ | b_n^{1 / p} g_{k, n}(s) x
|^2 \1_{\{| b_n^{1 /
p} g_{k,n}(s) x | < 1 \}} \bigr) \, \di s.
\end{align*}
Suppose there exists a constant $K \geq0$ such that for all large $n
\in\N$
\begin{equation}
\label{eq:chi_leq} \chi_n(x) \leq K \bigl(| x |^p + | x
|^2\bigr) \quad\text{for all $x \in[-1, 1]$},
\end{equation}
then the dominated convergence theorem implies that
\[
\limsup_{m \to\infty} \Bigl[\limsup_{n \to\infty}
\xi_{n, m} \Bigr] \leq K \limsup_{m \to\infty} \int
_{| x | \leq\frac{1}{m}} \bigl(| x |^p + | x |^2\bigr)
\, \nu(\di x) = 0,
\]
using the assumption that $p > \beta$. We consider only \eqref
{eq:chi_leq} in the case of Theorem~\ref{th2}\ref{it:th2:1} as~\ref
{it:th2:2} is very similar, see \cite{BP17}. In the case of~\ref
{it:th2:1} then $\smash{b_n^{1/p}} = n^{\alpha+ 1 / p}$. For short
notation define $\varPhi_p : \R\to\R_+$ as the function
\[
\varPhi_p(y) = | y |^2 \1_{\{| y | \leq1\}} + | y
|^p \1_{\{| y | > 1\}}, \quad y \in\R.
\]
Note that $\varPhi_p$ is of modular growth, i.e. there exists a constant
$K_p > 0$ depending only on $p$ such that 
$\varPhi_p(x + y) \leq K_p(\varPhi_p(x) + \varPhi_p(y))$
for any $x,y \in\R$. We consider the
following decomposition
\begin{align*}
\chi_n(x) &= \int_{\frac{k}{n} - \frac{1}{n}}^{\frac{k}{n}} \varPhi
_p\bigl(n^{\alpha+ 1/p} g_{k, n}(s) x\bigr) \, \di s + \sum
_{z = 1}^{l} \int_{\frac{k}{n} - \theta_z - \frac{1}{n}}^{\frac{k}{n} - \theta_z + \frac
{1}{n}}
\varPhi_p\bigl(n^{\alpha+ 1/p} g_{k, n}(s) x\bigr) \, \di s
\\
&\quad+ \sum_{z = 1}^{l} \int
_{\frac{k}{n} - \theta_z + \frac
{1}{n}}^{\frac{k}{n} - \theta_{z-1} - \frac{1}{n}} \varPhi_p
\bigl(n^{\alpha+
1/p} g_{k, n}(s) x\bigr) \, \di s
\\
&\quad+ \int_{\frac{k}{n} - \theta_l - \delta}^{\frac{k}{n} - \theta_l
- \frac{1}{n}} \varPhi_p
\bigl(n^{\alpha+ 1/p} g_{k, n}(s)x\bigr) \, \di s
\\
&\quad+ \int_{-\infty}^{\frac{k}{n} - \theta_l - \delta} \varPhi _p
\bigl(n^{\alpha+ 1/p} g_{k, n}(s) x\bigr) \, \di s
\\
&=: I_0(x) + \sum_{z = 1}^{l}
I_{1, z}(x) + \sum_{z = 1}^{l}
I_{2,
z}(x) + I_3(x) + I_4(x).
\end{align*}
We treat the five types of terms separately.

\paragraph{Estimation of $I_0$} By Lemma~\ref{lem1}
\[
| g_{k, n}(x) | \leq K \bigl(| \tfrac{k}{n} - s |^{\alpha_0}
\bigr) \quad\text {for all $s \in\bigl[\tfrac{k}{n} - \tfrac{1}{n},
\tfrac{k}{n}\bigr]$}.
\]
Since $\varPhi_p$ is increasing on $\R_+$ and $\alpha\leq\alpha_0$ it
follows that
\[
I_0(x) \leq K \int_{0}^{\frac{1}{n}}
\varPhi_p\bigl(x n^{\alpha+ 1/p} s^{\alpha
_0} \bigr) \, \di s
\leq K \int_{0}^{\frac{1}{n}} \varPhi_p\bigl(x
n^{\alpha+ 1/p} s^{\alpha} \bigr) \, \di s.
\]
By elementary integration it follows that
\begin{align*}
&\int_{0}^{\frac{1}{n}} | x n^{\alpha+ 1/p}
s^{\alpha} |^2 \1_{\{ | x
n^{\alpha+ 1/p} s^\alpha| \leq1\}} \, \di s
\\
&\quad\leq K \bigl(x^2 \1_{\{ | x | \leq n^{-1/p} \}} n^{2/p - 1} +
\1_{\{
| x | > n^{-1 / p} \}} | x |^{-1/\alpha} n^{-1 - 1/(\alpha p)}\bigr)
\\
&\quad\leq K\bigl(x^2 + | x |^p\bigr).
\end{align*}
The second term in $\varPhi_p$ is dealt with as follows:
\[
\int_{0}^{\frac{1}{n}} | x n^{\alpha+ 1/p}
s^{\alpha} |^p \1_{\{| x
n^{\alpha+ 1/p} s^{\alpha} | > 1\}} \, \di s \leq| x
|^p n^{\alpha p
+ 1} \int_{0}^{\frac{1}{n}}
s^{\alpha p} \, \di s = \frac{| x
|^p}{\alpha p + 1}.
\]
Combining the two estimates above it follows that $I_0(x) \leq K(|x|^2
+ |x|^p)$.

\paragraph{Estimation of $I_{1, z}$} Similarly as for $I_0$, we have,
using arguments as in part~\ref{it:lem1:1} of Lemma~\ref{lem1}, that
\[
| g_{k,n}(s) | \leq K \sum_{j = 0}^{k}
| \tfrac{k - j}{n} - s - \theta _z |^{\alpha_z} \quad\text{for
all $s \in\bigl[\tfrac{k}{n} - \theta_z - \tfrac{1}{n},
\tfrac{k}{n} - \theta_z + \tfrac{1}{n}\bigr]$}.
\]
Using the modular growth of $\varPhi_p$ it follows that
\begin{align*}
&\int_{\frac{k}{n} - \theta_z - \frac{1}{n}}^{\frac{k}{n} - \theta_z +
\frac{1}{n}} \varPhi_p
\bigl(n^{\alpha+ 1/p} g_{k, n}(s) x\bigr) \, d s
\\
&\quad\leq K_p \sum_{j = 0}^{k}
\int_{\frac{k}{n} - \theta_z - \frac
{1}{n}}^{\frac{k}{n} - \theta_z + \frac{1}{n}} \varPhi_p
\bigl(n^{\alpha+ 1/p} |\tfrac{k - j}{n} - s - \theta_z|^{\alpha_z}
x\bigr) \, \di s
\\
&\quad= K_p \sum_{j = 0}^{k}
\int_{-\frac{j}{n} - \frac{1}{n}}^{-\frac
{j}{n} + \frac{1}{n}} \varPhi_p
\bigl(n^{\alpha+ 1/p} |s|^{\alpha_z} x\bigr) \, \di s
\\
&\quad\leq K_p \int_{-\frac{k + 1}{n}}^{\frac{k + 1}{n}} \varPhi
_p\bigl(n^{\alpha+ 1/p} |s|^{\alpha} x\bigr) \, \di s
\\
&\quad= K_p \int_{0}^{\frac{k + 1}{n}}
\varPhi_p\bigl(n^{\alpha+ 1/p} |s|^{\alpha} x\bigr) \, \di s.
\end{align*}
As for $I_0$, we get $I_{1, z}(x) \leq K(|x|^2 + |x|^p)$.

\paragraph{Estimation of $I_{2, z}$} We decompose $I_{2, z}$ into three
terms corresponding to whether we are close to the singularity $\theta
_z$ from the right or close to the singularity $\theta_{z-1}$ from the
left or in between them, but bounded away from both. More specifically,
we decompose as
\begin{align*}
I_{2,z}(x) &= \int_{\frac{k}{n} - \theta_z + \frac{1}{n}}^{\frac{k}{n}
- \theta_z + \delta}
\varPhi_p\bigl(n^{\alpha+ 1/p} g_{k, n}(s) x\bigr) \, \di s
\\
&\quad+ \int_{\frac{k}{n} - \theta_z + \delta}^{\frac{k}{n} - \theta_{z
- 1} - \delta} \varPhi_p
\bigl(n^{\alpha+ 1/p} g_{k, n}(s) x\bigr) \, \di s
\\
&\quad+ \int_{\frac{k}{n} - \theta_{z-1} - \delta}^{\frac{k}{n} - \theta
_{z - 1} - \frac{1}{n}} \varPhi_p
\bigl(n^{\alpha+ 1/p} g_{k, n}(s) x\bigr) \, \di s =:
I_{2,z}^{l}(x) + I_{2,z}^{b}(x) +
I_{2,z}^{r}(x).
\end{align*}
First we note that arguments similar to Lemma~\ref{lem1}\ref{it:lem1:3}
imply that
\[
| g_{k, n}(s) | \leq K n^{-k} |\tfrac{k}{n} - s -
\theta_z|^{\alpha_z -
k} \quad\text{for all $s \in\bigl[
\tfrac{k}{n} - \theta_z + \tfrac{1}{n}, \tfrac{k}{n} -
\theta_z + \delta\bigr]$.}
\]
Using again that $\varPhi_p$ is decreasing on $\R_+$ it follows that
\begin{align*}
I_{2, z}^{l}(x) &\leq K \int_{\frac{k}{n} - \theta_z + \frac
{1}{n}}^{\frac{k}{n} - \theta_z + \delta}
\varPhi_p\bigl(n^{\alpha+ 1/p - k} | \tfrac{k}{n} - s -
\theta_z |^{\alpha_z - k} x\bigr) \, \di s
\\
&\leq K \int_{\frac{1}{n}}^{\delta} \varPhi_p
\bigl(n^{\alpha+ 1/p - k} |s|^{\alpha_z - k} x\bigr) \, \di s.
\end{align*}
If $\alpha_z = k - 1/2$ then
\begin{align*}
\int_{\frac{1}{n}}^{\delta} | x n^{\alpha+ 1/p - k}
s^{\alpha_z -
k}|^2 \1_{\{ | x^2 n^{\alpha+ 1/p - k} s^{\alpha_z - k} | \leq1 \}} \, \di s &\leq
x^2 n^{2(\alpha+ 1/p - k)} \int_{\frac{1}{n}}^{\delta}
s^{-1} \, \di s
\\
&\leq K x^2,
\end{align*}
where we used that $\alpha< k - 1/p$. For $\alpha_z \neq k - 1 / 2$ we
have that
\begin{align*}
&\int_{\frac{1}{n}}^{\delta} | x n^{\alpha+ 1/p - k}s^{\alpha_z -k}|^2 \1_{\{| x n^{\alpha+ 1/p - k} s^{\alpha_z - k}| \leq1\}} \, \di s\\
&\quad \leq K\bigl(|x|^2 n^{2(\alpha+ 1/p - k)} + |x|^2 n^{2(\alpha- \alpha_z) + 2/p - 1} \1_{\{| x | \leq n^{-1/p} \}}\\
&\qquad+ |x|^{\frac{1}{k - \alpha_z}} n^{\frac{\alpha+ 1/p - k}{k - \alpha_z}} \1_{\{|x| > n^{-1/p}\}}\bigr)\\
&\quad \leq K\bigl(x^2 + |x|^p\bigr),
\end{align*}
where we used that $\alpha\leq\alpha_z < k - 1/p$. Moreover,
\[
\int_{\frac{1}{n}}^{\delta} |x n^{\alpha+ 1/p - k}
s^{\alpha_z - k}|^p \1_{\{| x n^{\alpha+ 1/p - k} s^{\alpha_z - k}| > 1\}} \, \di s \leq K
|x|^p.
\]
The term $I_{2,z}^{r}$ is handled similarly. For the last term $I_{2,
z}^b$ we note that, since we are bounded away from both $\theta_{z-1}$
and $\theta_z$, there exists a constant $K > 0$ such that
\begin{equation*}
|g_{k,n}(s)| \leq K n^{-k} \quad\text{for all $s \in\bigl[
\tfrac{k}{n} - \theta_z + \delta, \tfrac{k}{n} -
\theta_{z-1} - \delta\bigr]$.}
\end{equation*}
This readily implies the bound $I_{2,z}^{b}(x) \leq K(x^2 + |x|^p)$.

\paragraph{Estimation of $I_3$} Arguments as in Lemma~\ref{lem1} imply that
\begin{equation*}
|g_{k, n}(s)| \leq K n^{-k}|\tfrac{k}{n} - s -
\theta_z|^{\alpha_l - k} \quad\text{for all $s \in\bigl[
\tfrac{k}{n} - \theta_l - \delta, \tfrac
{k}{n} -
\theta_l - \tfrac{1}{n}\bigr]$.}
\end{equation*}
One may then proceed as for the term $I_{2,z}^l$ above to conclude that
$I_3(x) \leq K(x^2 + |x|^p)$.

\paragraph{Estimation of $I_4$:} First we decompose the integral\querymark{Q5} into two sub-integrals:
\begin{align*}
\int_{-\infty}^{\frac{k}{n} - \theta_l - \delta} \varPhi_p
\bigl(n^{\alpha+
1/p} g_{k, n}(s) x\bigr) \, \di s &= \int
_{-\delta' - \theta_l}^{\frac{k}{n}
- \delta- \theta_l} \varPhi_p
\bigl(n^{\alpha+ 1/p} g_{k, n}(s)x \bigr) \, \di s
\\
&\quad+ \int_{-\infty}^{-\delta' - \theta_l} \varPhi_p
\bigl(n^{\alpha+ 1/p} g_{k, n}(s) x\bigr) \, \di s.
\end{align*}
In the first integral we are bounded away from $\theta_l$, hence
$|g_{k,n}(s)| \leq K n^{-k}$ for all $s$ in the interval $[-\delta' -
\theta_l, \frac{k}{n} - \delta- \theta_l]$.
For the latter integral note first that by Lemma~\ref{lem1}\ref{it:lem1:5}
\begin{equation*}
\int_{-\infty}^{-\delta' - \theta_l} \varPhi_p
\bigl(n^{\alpha+ 1/p} g_{k,
n}(s) x\bigr) \, \di s \leq\int
_{-\infty}^{-\delta' - \theta_l} \varPhi _p
\bigl(n^{\alpha+ 1/p - k} |g^{(k)}(-s)| x\bigr) \, \di s.
\end{equation*}
Now
\begin{align*}
&\int_{\delta' + \theta_l}^{\infty} | x n^{\alpha+ 1/p - k} g^{(k)}(s) |^2 \1_{\{ |x n^{\alpha+ 1/p - k} g^{(k)}(s) | \leq1 \}} \, \di s \\
&\quad \leq|x n^{\alpha+ 1/p - k}|^2 \int_{\delta' + \theta_l}^{\infty}|g^{(k)}(s)|^2 \, \di s.
\end{align*}
Since $|g^{(k)}|$ is decreasing on $(\theta_l + \delta', \infty)$ and
$g^{(k)} \in L^w((\theta_l + \delta', \infty))$ for some $w \leq2$ it
follows that the last integral is finite. Lastly, we find for $x \in
[-1 , 1]$ that
\begin{align*}
&\int_{\theta_l + \delta'}^{\infty} |x n^{\alpha+ 1/p - k}
g^{(k)}(s)|^p \1_{\{|x n^{\alpha+ 1/p - k} g^{(k)}(s)| > 1\}} \, \di s
\\
&\quad \leq|x|^p n^{p(\alpha+ 1/p - k)} \int_{\delta' + \theta_l}^{\infty}
|g^{(k)}(s)|^p \1_{\{|g^{(k)}(s)| > 1 \}} \, \di s.
\end{align*}
By our assumptions the last integral is finite, indeed
\begin{equation*}
\int_{\delta' + \theta_l}^{\infty} |g^{(k)}(s)|^p
\1_{\{|g^{(k)}(s)| >
1\}} \, \di s \leq K_p \| g^{(k)}
\|_{L^w((\delta' + \theta, \infty
))}^w < \infty.
\end{equation*}

\subsubsection{Negligibility of small jumps}

Now, we note that $X_t - X_t(m)$ is the integral \eqref{Xmodel}, where
the integrator is a compound Poisson process that corresponds to big
jumps of $L$. Hence, we obtain the results of Theorem~\ref{th2} for the
process $X-X(m)$ as in Section~\ref{sec3.1}. More specifically, under
assumptions of Theorem~\ref{th2}\ref{it:th2:1} it holds that
\[
n_j^{\alpha p}V\bigl(X - X(m), p; k\bigr)_{n_j} \stab
\sum_{z \in\mathcal A} |c_z|^p \sum
_{r : T_r \in[-\theta_z, 1 - \theta_z]} |\Delta L_{T_r}|^p
\1_{\{|\Delta L_{T_r}| > \frac{1}{m}\}} V_r^z
\]
where $V_r^z$ has been defined at \eqref{part1.2}. The term on the
right-hand side converges to the limit of Theorem~\ref{th2}\ref
{it:th2:1} as $m \to\infty$, since
\begin{equation*}
\sum_{r : T_r \in[-\theta_z, 1 - \theta_z]} |\Delta L_{T_r}|^p
< \infty\quad\text{for any $p > \beta$.}
\end{equation*}
Finally, using the decomposition $X=(X-X(m)) + X(m)$ and letting first
$n_j \to\infty$ and then $m\to\infty$, we deduce the statement of
Theorem~\ref{th2} by \eqref{smalljumps} and the inequalities \eqref
{ineq}. This completes the proof.\qed

\begin{acknowledgement}[title={Acknowledgments}]
The authors acknowledge financial support from the project
``Ambit fields: probabilistic properties and statistical inference''
funded by Villum Fonden.
\end{acknowledgement}


%




\end{document}